\title[Bounding Projective Dimension]{Bounding the Projective Dimension of a Squarefree Monomial Ideal via Domination in Clutters}
\author{Hailong Dao and Jay Schweig}
\newtheorem{theorem}{Theorem}[section]
\newtheorem{proposition}[theorem]{Proposition}
\newtheorem{corollary}[theorem]{Corollary}
\newtheorem{lemma}[theorem]{Lemma}
\newtheorem{observation}[theorem]{Observation}                                               
\theoremstyle{definition}
\newtheorem{definition}[theorem]{Definition}
\newtheorem{remark}[theorem]{Remark}
\newcommand{\flo}[2]{\big\lfloor \frac{#1}{#2} \big\rfloor}
\newcommand{\cei}[2]{\big\lceil \frac{#1}{#2} \big\rceil}
\newcommand{\Bflo}[2]{\Big\lfloor \frac{#1}{#2} \Big\rfloor}
\newcommand{\Bcei}[2]{\Big\lceil \frac{#1}{#2} \Big\rceil}
\newcommand{\C}{\mathcal{C}}          
\DeclareMathOperator{\K}{k}
\DeclareMathOperator{\bh}{BigHeight}
\DeclareMathOperator{\pd}{pd}
\DeclareMathOperator{\cd}{cd}
\DeclareMathOperator{\reg}{reg}
\DeclareMathOperator{\lcm}{lcm}
\DeclareMathOperator{\is}{is}
\begin{document} 

\maketitle

\begin{abstract}
We introduce the concept of edgewise domination in clutters, and use it to provide an upper bound for the projective dimension of any squarefree monomial ideal.  We then compare this bound to a bound given by Faltings.  Finally, we study a family of clutters associated to graphs and compute domination parameters for certain classes of these clutters. 
\end{abstract}

\section{Introduction}

Fix a field $\K$, and let $S = \K[x_1, x_2, \ldots, x_n]$. Let $I \subset S$ be a homogenous ideal in $S$. Two fundamental invariants of $I$ are its projective dimension $\pd(I)$ and its Castelnuovo-Mumford regularity $\reg(I)$. 
There has been tremendous interest in understanding these numbers, due to their connections to topics in algebraic geometry, commutative algebra, and combinatorics. 

One of the main results in  this paper  provides a new upper bound on these invariants when $I$ is a square-free monomial ideal. Even in this special case, the problem is quite subtle and significant, as illustrated by its well-known link to combinatorial topology.   Let  $\Delta$ be a simplicial complex on vertex set $[n] = \{1, 2, \ldots, n\}$. In general, the minimal non-faces of any simplicial complex $\Delta$ form a \emph{clutter} on $[n]$, which is a collection of subsets of $[n]$, none of which contains another.  If $\C$ is a clutter, we let $I(\C)$ be the ideal generated by monomials corresponding to the sets in $\C$ (so that $I(\C)$ is an edge ideal when the sets in $\C$ are edges of a graph).  In this way, every squarefree monomial ideal arises as the Stanley-Reisner ring of some simplicial complex $\Delta$.  Through Hochster's Formula, bounds on $\pd(I(\C))$ yield results about the homology of $\Delta$. 

Let us now describe our bound in more detail. Let $I\subset S$ be a square-free monomial ideal; we impose no other restrictions on $I$. Obviously, one has $\pd(S/I)\leq n$ by the Auslander-Buchsbaum formula. The only nontrivial bound that we are aware of follows from a quite general result due to Faltings (see Theorem \ref{faltings}). Our bound is defined combinatorially and results from a new clutter domination parameter which we believe to be of independent interest. Recall that when the minimal non-faces of a complex $\Delta$   all have cardinality two (that is, $\Delta$ is a \emph{flag} complex), then the Stanley-Reisner ideal of $\Delta$ is the edge ideal of the graph with vertex set $[n]$ and edge set $\{(i, j) : \{i, j\}$ is not a face of $\Delta\}$.  In \cite{meandlong}, we employ domination parameters and an induction argument to bound the projective dimension of this ideal.  In particular, we introduce a new graph domination parameter called \emph{edgewise domination}, which works especially well in the bounding of projective dimension.   
In this paper we generalize edgewise domination from graphs to clutters, and show that if $\epsilon(\C)$ is this new domination parameter of a clutter $\C$ (see Section \ref{secclut}) and $V(\C)$ is the set of vertices of $\C$, then the following holds.\\

\noindent
\textbf{Theorem \ref{edgewise}.}  \emph{For any clutter $\C$, $\pd(S/I(\C)) \leq |V(\C)| - \epsilon(\C)$}.  \\

Just as in \cite{meandlong}, bounds on the projective dimension of a squarefree monomial ideal $I$ give rise to homological bounds on the complex $\Delta$ whose Stanley-Reisner ideal is $I$.

Another class of clutters considered in our paper is the family  $\{\C_k(G) : 2 \leq k \leq |V(G)|\}$ of clutters associated to a graph $G$.  These clutters (which correspond to Stanley-Reisner ideals of complexes studied by Szab\'o and Tardos in \cite{st}, and which were later examined by Dochtermann and Engstr\"om in \cite{de}) generalize edge ideals, as $\C_2(G)$ is simply the set of edges of $G$.  One of our motivations for the study of these clutters is Theorem \ref{simplicial}, which states that a vertex of $G$ is simplicial if and only if it is simplicial in $\C_k(G)$ for all $k \leq |V(G)|$.   

Edge ideals have been generalized to \emph{path ideals} (introduced in \cite{pathidealsintro} and studied further in many papers such as \cite{faridi,  pathideals1, cgmpp, adampathideal}).  The clutters $\C_k(G)$ thus provide another generalization of edge ideals which, unlike path ideals, do not require an orientation of the graph $G$.  

%These clutters provide another generalization of edge ideals, the {\it path ideals}, introduced in \cite{} and studied in \cite{}.  

The class $\C_k(G)$ of clutters raises many other interesting questions.  We begin a study of the domination parameters of these clutters, obtaining formulas for the projective dimension of $\C_k(G)$ when $G$ is a path and bounds for the projective dimension of $\C_k(G)$ when $G$ is a cycle, see Section \ref{seccon}.  

We begin the paper with Section \ref{secpre}, in which we list some useful preliminaries. Section \ref{secclut}  introduces the edgewise domination parameter for clutters. Here one of our main technical results, Theorem \ref{edgewise}, is proved. Section \ref{seccon} discusses \emph{connected graph ideals}, which generalize edge ideals. We also compute domination parameters for these clutters in the case when the associated graph is a path or a cycle, and compare them with the projective dimensions of these clutters (Theorem \ref{pathcycle}).  Section \ref{secfalt} recalls a consequence of a result by Faltings which gives a simple upper bound for the projective dimensions of clutters. For completeness, we give a short proof of Faltings' bound in the case when $I$ is a squarefree monomial ideal, and we compare this bound to the ones obtained in this paper. Finally, in Section \ref{apps}, we derive corollaries of our results which bound the homology of simplicial complexes.

\section{Preliminaries}\label{secpre}

Fix a field $\text{k}$, and let $S = \text{k}[x_1, x_2, \ldots, x_n]$.  If $I \subseteq S$ is a squarefree monomial ideal, we can identify its unique minimal set of generators with a \emph{clutter}, defined as follows.

\begin{definition}
Let $V$ be a finite set.  A \emph{clutter} $\C$ with vertex set $V$ consists of a set of subsets of $V$, called the \emph{edges} of $\C$, with the property that no edge contains another.
\end{definition}

We write $V(\C)$ to denote the vertices of $\C$, and $E(\C)$ to denote its edges.  For simplicity, we often identify vertex sets with subsets of $\mathbb{N}$.  We say two vertices of $\C$ are \emph{neighbors} if there is an edge of $\C$ which contains both these vertices.  We also let $\is(\C)$ denote the set of vertices of $\C$ not appearing in any edge, and we write $\overline{\C}$ to denote $\C$ with its isolated vertices removed.  If $A\subseteq V(\C)$ contains no edge of $\C$, we say $A$ is \emph{independent}.

\begin{definition}
Let $\C$ be a clutter.  Key to our constructions will be two clutters obtained from $\C$, defined as follows.  

\begin{itemize}
\item If $|A| \geq 1$, $\C + A$ is the clutter whose edges are the minimal sets of $E(\C) \cup \{A\}$ and vertex set $ V(\C)$. 

\item $\C : A$ is the clutter whose edges are the minimal sets of $\{e  \setminus A: e\in E(\C)\}$ and whose vertex set is $V(\C) \setminus A$.  
\end{itemize}
\end{definition}

Note that some clutters may have edges which contain only one element.  We call such edges \emph{trivial}.  Trivial edges are often easy to handle, since if $\{v\}$ is a trivial edge of some clutter $\C$, then it is the only edge in which the vertex $v$ appears.  However, it is important to distinguish between a \emph{vertex in a trivial edge} and an \emph{isolated vertex}. 

If $A \subseteq \{1, 2, \ldots, n\}$, we write $x^A$ to denote the squarefree monomial $\prod_{i\in A}x_i$.  If $\C$ is a clutter, the associated monomial ideal $I(\C)$ is given by
\[
I(\C) = ( x^e : e\in E(\C)).
\]

The operations introduced above correspond to standard operations on ideals as follows.  Throughout, let $\C$ be a clutter.

\begin{observation}\label{opsob}
Let $A \subseteq V(\C)$.  Then 
\[
(I(\C), x^A) = I(\C + A)\text{ and } I(\C): x^A = I(\C : A). 
\]
\end{observation}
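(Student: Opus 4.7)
The statement is an unpacking of two standard facts about monomial ideals, so my plan is to verify each equality by showing both sides have the same minimal generating set (which exists and is unique for monomial ideals). I would treat the two equalities separately but in parallel style.

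For the first equality $(I(\C), x^A) = I(\C + A)$, I would start from the obvious generating set: $(I(\C), x^A)$ is generated by $\{x^e : e \in E(\C)\} \cup \{x^A\}$, and the corresponding exponent vectors are exactly the sets in $E(\C) \cup \{A\}$. Since for squarefree monomials one has $x^e \mid x^f$ iff $e \subseteq f$, the minimal generators of $(I(\C), x^A)$ correspond precisely to the $\subseteq$-minimal members of $E(\C) \cup \{A\}$. By definition this minimal family is $E(\C + A)$, giving the first equality.

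For the second equality $I(\C) : x^A = I(\C : A)$, I would invoke the standard computation of a colon with a (squarefree) monomial for a monomial ideal: if $J = (x^{e_1}, \ldots, x^{e_r})$, then
\[
J : x^A \;=\; \bigl(x^{e_i}/\gcd(x^{e_i}, x^A) : 1 \leq i \leq r\bigr) \;=\; \bigl(x^{e_i \setminus A} : 1 \leq i \leq r\bigr).
\]
Applied with $J = I(\C)$, this shows $I(\C) : x^A$ is generated by $\{x^{e \setminus A} : e \in E(\C)\}$, and the same divisibility-equals-inclusion argument as above identifies the minimal generators with the $\subseteq$-minimal elements of $\{e \setminus A : e \in E(\C)\}$, which is exactly $E(\C : A)$. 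Hence $I(\C) : x^A = I(\C : A)$.

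There is no real obstacle here; the only point that deserves a line of comment is that the minimal generating set of a monomial ideal is unique and is obtained from any generating set by discarding generators divisible by another, so extracting minimal elements on the combinatorial side corresponds exactly to pruning redundant generators on the algebraic side. I would keep the write-up to a few sentences per equality.
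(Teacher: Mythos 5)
Your proof is correct: both equalities follow exactly as you say, by matching minimal generators of the monomial ideals with the $\subseteq$-minimal sets defining $\C + A$ and $\C : A$, using that divisibility of squarefree monomials corresponds to containment of supports and the standard formula for the colon of a monomial ideal by a monomial. The paper states this Observation without any proof, treating it as routine, and your argument is precisely the standard verification that is being left implicit, so there is nothing to compare beyond noting that you have filled in the omitted details correctly.
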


This observation allows us to prove a lemma essential to an inductive bounding of a clutter's projective dimension.  

\begin{lemma}\label{exact}
Let $A \subseteq V(\C)$.  Then 
\[
\pd(\C) \leq \max\{ \pd(\C + A), \pd(\C :A)\},
\]
where here (and throughout), we write $\pd(\C)$ to mean $\pd(S / I(C))$, the projective dimension of the quotient $S / I(\C)$.  
\end{lemma}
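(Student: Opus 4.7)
The plan is to use the standard short exact sequence associated to multiplication by $x^A$, together with Observation \ref{opsob}. Specifically, consider the sequence of graded $S$-modules
\[
0 \longrightarrow (S/(I(\C):x^A))(-|A|) \xrightarrow{\;\cdot x^A\;} S/I(\C) \longrightarrow S/(I(\C), x^A) \longrightarrow 0,
\]
which is exact essentially by definition of colon. Applying Observation \ref{opsob}, the left term becomes $S/I(\C:A)$ (up to a shift) and the right term becomes $S/I(\C+A)$.

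From the general fact that for any short exact sequence $0 \to M' \to M \to M'' \to 0$ of finitely generated graded $S$-modules one has $\pd(M) \leq \max\{\pd(M'),\pd(M'')\}$, and the fact that a degree shift does not affect projective dimension, I would deduce
\[
\pd\big(S/I(\C)\big) \leq \max\big\{\pd\big(S/(I(\C):x^A)\big),\ \pd\big(S/(I(\C),x^A)\big)\big\}.
\]

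The one subtlety, which I'd flag as the main (mild) obstacle, is that the convention $\pd(\C:A) := \pd(S_{\C:A}/I(\C:A))$ uses the smaller polynomial ring $S' = \K[x_v : v\in V(\C)\setminus A]$, whereas the short exact sequence above lives in $S = \K[x_v : v\in V(\C)]$. To reconcile these, I would observe that the generators of $I(\C):x^A$ involve no variable from $A$, so $I(\C):x^A$ is the extension to $S$ of the ideal $I(\C:A) \subset S'$. Since $S$ is a free (hence flat) $S'$-algebra, tensoring a minimal $S'$-free resolution of $S'/I(\C:A)$ with $S$ produces a minimal $S$-free resolution of $S/(I(\C):x^A)$, so these projective dimensions coincide. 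A similar (trivial) verification handles $\pd(\C+A)$, since $\C+A$ already has vertex set $V(\C)$.

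Combining these identifications with the inequality above yields exactly $\pd(\C) \leq \max\{\pd(\C+A), \pd(\C:A)\}$, as desired. No induction or further combinatorial input is required at this stage; the lemma is purely a homological translation of Observation \ref{opsob}.
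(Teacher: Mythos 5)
Your proof is correct and follows exactly the same route as the paper: the standard short exact sequence for multiplication by $x^A$, translated via Observation \ref{opsob}, combined with the general bound $\pd(M)\leq\max\{\pd(M'),\pd(M'')\}$. You are in fact more careful than the paper, which silently elides both the degree shift and the change-of-ring issue you flag and resolve via flatness of $S$ over $S'$.
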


\begin{proof}
This follows from Observation \ref{opsob} and the natural short exact sequence 
\[
0 \rightarrow \frac{S}{I(\C) : x^A} \rightarrow \frac{S}{I(\C)} \rightarrow \frac{S}{(I(\C), x^A)}\rightarrow 0,
\]
which gives $\pd(I(\C)) \leq \max\{\pd(I(\C): x^A), \pd((I(\C), x^A))\}$.  The final statement is immediate.
\end{proof}

\begin{definition} 
Let $\Phi$ be a collection of clutters.  We say $\Phi$ is \emph{hereditary} if both $\C + A$ and $\C : A$ are in $\Phi$ for any $A \subseteq V(\C)$, as is $\overline{\C}$.
\end{definition}

\section{Domination in Clutters}\label{secclut}

Let $\C$ be a clutter.  Here we generalize the concept of \emph{edgewise domination}, first introduced in \cite{meandlong}, to clutters:

\begin{definition}\label{clutdef}
 We call $F \subseteq E(\C)$ \emph{edgewise dominant} if every vertex in $V(\overline{\C})$ not contained in some edge of $F$ or contained in a trivial edge has a neighbor contained in some edge of $F$.  We define $\epsilon(\C)$ by $\epsilon(\C) = \min \{|F| : F \subseteq E(\C)$ is edgewise dominant$\}$.

\end{definition}

Our main result from this section is the following.

\begin{theorem}\label{edgewise}
For any clutter $\C$, $\pd(\C) \leq |V(\C)| - \epsilon(\C)$.  
\end{theorem}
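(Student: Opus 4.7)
The plan is to induct on the lexicographic pair $(|V(\C)|,|E(\C)|)$, using Lemma~\ref{exact} as the splitting tool. One preliminary reduction handles isolated vertices: stripping them leaves $\pd(\C)$ and $\epsilon(\C)$ unchanged while shrinking $|V(\C)|$, so the inequality we must prove becomes strictly stronger. The base case is when every edge of $\C$ is trivial, in which case $I(\C)$ is generated by distinct variables, $\pd(\C) = |E(\C)| \leq |V(\C)|$, and $\epsilon(\C) = 0$, so the bound holds.

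For the inductive step, fix a minimum edgewise dominant set $F$, an edge $e \in F$, and a carefully chosen vertex $v \in e$. Applying Lemma~\ref{exact} with $A = \{v\}$ gives
\[
\pd(\C) \leq \max\{\pd(\C + \{v\}),\ \pd(\C : \{v\})\}.
\]
Both clutters on the right are smaller than $\C$ in the inductive order: $\C+\{v\}$ has the same vertex set but strictly fewer edges (every edge of $\C$ containing $v$ is removed, and only $\{v\}$ is added), while $\C:\{v\}$ has one fewer vertex. The inductive hypothesis yields $\pd(\C+\{v\}) \leq |V(\C)| - \epsilon(\C+\{v\})$ and $\pd(\C:\{v\}) \leq |V(\C)| - 1 - \epsilon(\C:\{v\})$, so it suffices to prove the two combinatorial inequalities
\[
\epsilon(\C+\{v\}) \geq \epsilon(\C) \qquad\text{and}\qquad \epsilon(\C:\{v\}) \geq \epsilon(\C)-1.
\]

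The second inequality is a lifting argument: given an edgewise dominant set $F'$ of $\C:\{v\}$, I would lift each edge of $F'$ to a preimage in $\C$ and adjoin $e$, producing a dominant set of $\C$ of size at most $|F'|+1$ that covers $v$ as well as any vertex whose domination was lost in passing to $\C:\{v\}$.

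The main obstacle is the first inequality $\epsilon(\C+\{v\}) \geq \epsilon(\C)$. A careless choice of $v$ can violate it: taking the central vertex of a three-vertex path leaves two previously non-isolated vertices isolated in $\C+\{v\}$, dropping $\epsilon$ from $1$ to $0$. The correct approach is to exploit the minimality of $F$ to pick $v \in e$ so that no other vertex of $\C$ is isolated in $\C+\{v\}$ -- equivalently, every neighbor of $v$ still has an edge of $\C$ not containing $v$ -- and then to show that any edgewise dominant set $F''$ of $\C+\{v\}$, after swapping the trivial edge $\{v\}$ for $e$, remains dominant in $\C$ without growing. Establishing the existence of such a $v$ and dealing with the residual case in which every candidate $v \in e$ fails (forcing a different reduction, for example splitting on a different $A$ or on a different edge of $F$) is the technical heart of the argument.
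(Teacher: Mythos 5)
Your reduction rests on the two inequalities $\epsilon(\C+\{v\})\ge\epsilon(\C)$ and $\epsilon(\C:\{v\})\ge\epsilon(\C)-1$, and both are false, even for choices of $v$ allowed by your side condition. Take the spider $\C$ with center $v_0$ and legs $v_0-a_i-w_i$ for $i=1,\dots,m$ (for $m=2$ this is the path on five vertices); here $\epsilon(\C)=m$, since each $w_i$ forces an edge meeting its leg, so every minimum edgewise dominant set $F$ consists of one edge per leg. If you choose $v=w_1$ (admissible: its only neighbor $a_1$ keeps the edge $\{v_0,a_1\}$, so nothing becomes isolated in $\C+\{w_1\}$), then $F''=\{\{v_0,a_2\}\}\cup\{\{a_j,w_j\}:j\ne 1,2\}$ is edgewise dominant in $\C+\{w_1\}$ because $w_1$ is now exempt (trivial edge) and $a_1$ is dominated via $v_0$; hence $\epsilon(\C+\{w_1\})=m-1<\epsilon(\C)$. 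Your swap repair does not apply, since $F''$ does not contain the trivial edge $\{w_1\}$, and indeed $F''$ fails to dominate $w_1$ in $\C$. If instead you choose $v=v_0$, then in $\C:\{v_0\}$ every $a_i$ lies in a trivial edge and every $w_i$ is isolated, so $\epsilon(\C:\{v_0\})=0<\epsilon(\C)-1$ once $m\ge 2$: the vertices isolated by the contraction lose their domination requirement, and the one adjoined edge $e$ cannot recover an arbitrarily large deficit (this is why the paper adds one edge per newly isolated vertex, not just $e$). Finally, $v=a_i$ both violates your side condition and has $\epsilon(\C+\{a_i\})=m-1$. So for this clutter \emph{every} choice of $e\in F$ and $v\in e$ breaks one of your two inequalities; the failure is not confined to the ``residual case'' you defer, and the single-vertex splitting scheme cannot be completed in this form. (A smaller point: $\C+\{v\}$ has the same number of edges as $\C$ when $v$ lies in exactly one edge, so your lexicographic measure need not drop; a measure like total degree would be needed.)

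The paper's proof avoids exactly this obstruction by never demanding that $\epsilon$ be monotone under a single addition or contraction. It fixes a vertex $x$ in a nontrivial edge with neighbors $y_1,\dots,y_t$ and applies Lemma \ref{exact} along the whole chain $\C_i=\C+\{y_1\}+\cdots+\{y_i\}$, so that $x$ (and possibly other vertices) are isolated in $\C_t$; the inductive framework of Lemma \ref{meta} then requires only the weaker inequalities $\epsilon(\C_t)+|\is(\C_t)|\ge\epsilon(\C)$ and $\epsilon(\C_{i-1}:y_i)+|\is(\C_{i-1}:y_i)|+1\ge\epsilon(\C)$, because any drop in $\epsilon$ is paid for by isolated vertices, which vanish upon passing to $\overline{\C_t}$ and thereby lower the vertex count on the right-hand side. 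Those weaker inequalities are precisely what the transfer arguments can deliver: lift a dominant set, add one edge through $x$ (or through $x$ and $y_i$), and one edge per isolated vertex. In your $\C+\{w_1\}$ and $\C:\{v_0\}$ examples this compensation term is exactly what saves the bound. To salvage your approach you would need to build such an isolated-vertex credit into the inductive statement rather than asserting $\epsilon(\C+\{v\})\ge\epsilon(\C)$ and $\epsilon(\C:\{v\})\ge\epsilon(\C)-1$, at which point you have essentially reconstructed Lemma \ref{meta}.
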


%\begin{theorem}[NOT YET PROVEN]\label{tau}
%For any clutter $\C$, $\pd(\C) \leq |V(\C)| - \tau(\C)$.  
%\end{theorem}

We first need the following lemma, which generalizes Theorem 3.1 from \cite{meandlong}.

\begin{lemma}\label{meta}
Let $\Phi$ be a hereditary class of clutters, and let $f:\Phi \rightarrow \mathbb{N}$ be a function such that $f(\overline{\C}) = f(\C)$ for all clutters $\C$ 
in $\Phi$, $f(\C) \leq |V(\C)|$ when $\C$ has no edges, and $f(\C) = 0$ when $\C$ has only trivial edges.  Further suppose that $f$ satisfies the following: for any $\C \in \Phi$ with at least one non-trivial edge, there exists a sequence of sets $A_1, A_2, \ldots, A_t$ such that, writing $\C_i$ for the clutter $ \C + A_1 + A_2 + \cdots + A_i$, the following two properties are satisfied. 

\begin{itemize}
\item $|\is(\C_t)| > 0$ and $f(\C_t) + |\is(\C_t)| \geq f(\C)$, and
\item For each $i$, $f(\C_{i-1} : A_i) + |\is(\C_{i-1})| + |A_i| \geq f(\C)$.
\end{itemize}

Then for any $\C \in \Phi$, we have 
\[
\pd(\C) \leq |V(\overline{\C})| - f(\C). 
\]
\end{lemma}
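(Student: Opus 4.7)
The plan is to prove the statement by strong induction on $|V(\C)|$, leveraging the standard fact that $\pd(\C) = \pd(\overline{\C})$ (isolated vertices correspond to variables that do not appear in $I(\C)$ and hence can be adjoined freely without changing projective dimension). Combined with the assumption $f(\overline{\C}) = f(\C)$, this reduces the assertion to the case $\is(\C) = \emptyset$, where it becomes $\pd(\C) \leq |V(\C)| - f(\C)$.

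The base cases --- when $\C$ has no edges or only trivial edges --- follow immediately from the hypothesized values of $f$ together with direct computations of $\pd$. For the inductive step, assume $\C$ has a non-trivial edge, invoke the hypothesis to obtain sets $A_1, \ldots, A_t$, and iterate Lemma~\ref{exact}: applying it first with $A_1$ on $\C_0 = \C$, then with $A_2$ on $\C_1$, and so on, yields
\[
\pd(\C) \leq \max\Bigl\{\pd(\C_t),\ \max_{1 \leq i \leq t}\pd(\C_{i-1} : A_i)\Bigr\}.
\]

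It then suffices to bound each quantity on the right by $|V(\C)| - f(\C)$. For $\pd(\C_t)$, rewrite it as $\pd(\overline{\C_t})$; since $|\is(\C_t)| > 0$ by hypothesis, $\overline{\C_t}$ has strictly fewer vertices than $\C$, so induction applied to $\overline{\C_t}$ gives $\pd(\overline{\C_t}) \leq |V(\overline{\C_t})| - f(\C_t) = |V(\C)| - |\is(\C_t)| - f(\C_t)$, which the first bullet of the hypothesis collapses to $|V(\C)| - f(\C)$. For each $\pd(\C_{i-1} : A_i)$, we pass to $\overline{\C_{i-1} : A_i}$ (whose vertex set has size at most $|V(\C)| - |A_i|$), apply induction, and invoke the second bullet of the hypothesis.

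The main technical obstacle arises in this last step: the hypothesis bounds $f(\C_{i-1} : A_i)$ in terms of $|\is(\C_{i-1})|$, while the inductive bound for $\pd(\overline{\C_{i-1} : A_i})$ naturally involves $|\is(\C_{i-1} : A_i)|$ instead. Reconciling the two rests on the observation that any vertex isolated in $\C_{i-1}$ and not lying in $A_i$ remains isolated in $\C_{i-1} : A_i$, giving a lower bound on $|\is(\C_{i-1} : A_i)|$ in terms of $|\is(\C_{i-1})|$ and $|A_i|$ that, when combined with the second bullet, produces the inequality needed to close the induction.
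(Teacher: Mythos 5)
Your overall strategy coincides with the paper's: reduce to clutters without isolated vertices via $\pd(\C)=\pd(\overline{\C})$ and $f(\overline{\C})=f(\C)$, induct on $|V(\C)|$, iterate Lemma~\ref{exact} to get $\pd(\C)\leq\max\{\pd(\C_t),\max_i\pd(\C_{i-1}:A_i)\}$, and handle the two branches with the two bulleted hypotheses; the $\C_t$ branch is handled exactly as in the paper. The gap is in the step you yourself flag as the main obstacle, and your proposed reconciliation does not close it. Write $v=|V(\C)|$ (recall $\C$ now has no isolated vertices). Induction gives $\pd(\C_{i-1}:A_i)\leq |V(\overline{\C_{i-1}:A_i})|-f(\C_{i-1}:A_i)= v-|A_i|-|\is(\C_{i-1}:A_i)|-f(\C_{i-1}:A_i)$, so after invoking the second bullet the inductive step closes precisely when $|\is(\C_{i-1}:A_i)|\geq|\is(\C_{i-1})|$. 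Your observation only yields the discounted bound $|\is(\C_{i-1}:A_i)|\geq|\is(\C_{i-1})|-|A_i|$, and substituting it gives $\pd(\C_{i-1}:A_i)\leq v-f(\C)+|A_i|$, which is short of the claim by $|A_i|$. The discount double-counts $A_i$: the $|A_i|$ appearing in the second bullet is already ``spent'' on the vertices deleted by the colon operation (they are subtracted from $v$), so none of it is left over to absorb isolated vertices of $\C_{i-1}$ that happen to lie in $A_i$.

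What is actually needed, and what the paper uses in the equivalent form $|V(\overline{\C_{i-1}:A_i})|\leq|V(\overline{\C_{i-1}})|-|A_i|$, is the undiscounted inequality $|\is(\C_{i-1}:A_i)|\geq|\is(\C_{i-1})|$: every isolated vertex of $\C_{i-1}$ must lie outside $A_i$ and remain isolated in $\C_{i-1}:A_i$, while the vertices of $A_i$ are removed outright, so the two contributions $|\is(\C_{i-1})|$ and $|A_i|$ are counted disjointly. This holds when $A_i$ consists of non-isolated vertices of $\C_{i-1}$ (the situation in which the lemma is meant to be applied, and an assumption the paper's own chain of inequalities makes implicitly), but it can fail if $A_i$ meets $\is(\C_{i-1})$ --- exactly the case your discounting was trying to accommodate --- and in that case your arithmetic does not recover the stated bound. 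So as written the last step of your induction fails; the fix is not to weaken the isolated-vertex count by $|A_i|$, but to argue (or assume) that $A_i\cap\is(\C_{i-1})=\emptyset$ so that no discount is needed.
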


\begin{proof}
Note that it suffices to prove the statement for clutters without isolated vertices.  Indeed, if $\C$ had isolated vertices and we prove the lemma for $\overline{\C}$, we would then have $\pd(\C) = \pd(\overline{\C}) \leq |V(\overline{\C})| - f(\overline{\C}) = |V(\overline{\C})| - f(\C)$.  Similarly, if $\C$ had only trivial edges, we would have $\pd(\C) \leq |V(\C)| - f(\C) = |V(\C)|$, which holds.

Now suppose $\C \in \Phi$ has no isolated vertices and at least one non-trivial edge.  We use a repeated application of Lemma \ref{exact}.  There are two cases to consider.  In both cases, we induct on the number of vertices of $\C$ (the base case of one vertex being immediate).  First suppose $\pd(\C) \leq \pd(\C_1) \leq \pd(\C_2) \leq \cdots \leq \pd(\C_t) = \pd(\overline{\C_t})$.  As $|\is(\C_t)| > 0$, by induction we have $\pd(\overline{\C_t}) \leq |V(\overline{\C_t})| - f(\C_t) \leq |V(\C)|- |\is(\C_t)| - f(\C) + |\is(\C_t)| = |V(\C)| - f(\C)$.

Next, suppose that $\pd(\C) \leq \pd(\C_1) \leq \cdots \leq \pd(\C_{i-1}) \leq \pd(\C_{i-1}:A_i)$ for some $i\geq 1$ (where $\C_0 = \C$).  As $\C_{i-1}:A_i$ has fewer vertices than $\C$, by induction we have 
\begin{align*}
\pd(\C) &\leq \pd(\C_{i-1}: A_i) \leq |V(\overline{\C_{i-1} : A_i})| - f(\C_{i-1} : A_i) \\
&\leq |V(\overline{\C_{i-1}})| - |A_i| - f(\C_{i-1} : A_i) \\
&\leq |V(\C)|- |\is(\C_{i-1})| - |A_i| + |\is(\C_{i-1})| + |A_i| - f(\C) \\
&=|V(\C)| - f(\C).  \qedhere
\end{align*}  
\end{proof}

\begin{proof}[Proof of Theorem \ref{edgewise}]

Let $\C$ be a clutter.  We use Lemma \ref{meta}, letting $\Phi$ be the class of all clutters, and thus can assume $\C$ has no isolated vertices.  We can also assume that $\C$ has no trivial edges.  Indeed, if $\C$ had $k$ trivial edges and $\C'$ denoted $\C$ with these edges and vertices removed, proving the bound $\pd(\C') \leq |V(\C')| - \epsilon(\C)$ would also prove the corresponding bound for $\C$, as we would have $\pd(\C) = \pd(\C') + k$ and $|V(\C)| = |V(\C')| + k$.

Let $x$ be a vertex contained in some non-trivial edge of $\C$, and let $y_1, y_2, \ldots, y_t$ be the neighbors of $x$.  We apply Lemma \ref{meta} with $A_i = y_i$ for each $i$ and $f(\C) = \epsilon(\C)$.  Then $x$ is isolated in $\C_t$, by construction.  Let $F \subseteq E(\C_t)$ be an edgewise-dominant set of $\C_t$ with $|F| = \epsilon(\C_t)$.  Write $\is(\C_t) = \{x, z_1, z_2, \ldots, z_k\}$, pick an edge $e\in E(\C)$ containing $x$ and, for each $z_i$, pick an edge $e_i \in E(\C)$ containing $z_i$.  

Note that every edge of $F$ is an edge of $\C$, and let $F' = F \cup \{e, e_1, e_2, \ldots, e_k\}$.  We claim $F'$ is an edgewise-dominant set of $\C$.  Indeed, each $y_i$ is a neighbor of $x$, which is contained in $e$.  If $v$ is any other non-isolated vertex of $\C$, then either $v \in \overline{\C_t}$, in which case it is contained in an edge of $F$ or has a neighbor that is, or $ v = z_i$ for some $i$, in which case it is contained in $e_i$.  

Thus, $|F'| \leq  |F| + |\is(\C_t)| = \epsilon(\C_t) + |\is(\C_t)| \geq \epsilon(\C)$, meaning $\epsilon(\C)$ satisfies the first condition of Lemma \ref{meta}.  

The second condition is verified similarly: pick $i$, let $F \subseteq E(\C_{i-1} : y_i)$ be an edgewise-dominant set with $|F| = \epsilon(\C_{i-1}: y_i)$, and let $e$ be an edge of $\C$ that contains both $x$ and $y_i$.  If $f \in F$, then either $ f \cup \{y_i\}$ or $f$ must be an edge of $\C$ (whichever is, call this edge $f'$).  For each $z \in \is(\C_{i-1}: y_i)$, pick an edge $e_z \in \C$ containing it.  Set $F' = \{f ' : f\in F\} \cup\{e_z : z\in \is(\C_{i-1}: y_i)\} \cup \{e\}$.  We claim $F'$ is an edgewise-dominant set of $\C$.  Indeed, each $y_j$ is a neighbor of $x$, which is contained in the edge $e$.  If $v \neq y_j$ is a non-isolated vertex of $\C_{i-1}: y_i$, then it is either contained in some edge $f \in F$ or some neighbor of it is, and this property carries over to $\C$, replacing $f$ with $f'$.  Thus, 
$|F'| \leq |F| + |\is(\C_{i-1} : y_i)| + 1 = \epsilon(\C_{i-1} : y_i) + |\is(\C_{i-1}: y_i)| + |\{y_i\}| \geq \epsilon(\C)$. \end{proof}

\begin{remark}
For a clutter $\C$, let $i(\C)$ denote the smallest cardinality of an independent set of $\C$.  It is straightforward to show that $|V(\C)| - i(\C) = \bh(I(\C))$.  As $\pd(\C) \geq \bh(I(\C))$, this observation and Theorem \ref{edgewise} give: 
\[
|V(\C)| - i(\C) \leq \pd(\C) \leq |V(\C)| - \epsilon(\C).  
\]
\end{remark}

\section{Connected Graph Clutters and Ideals}\label{seccon}

Recall that the \emph{edge ideal} of a simple graph $G$ on vertex set $[n]$ is just $I(\C)$ where $\C$ is the clutter of edges of $G$.  That is, 
\[
I(G) = ( x_ix_j : (i, j) \in E(G) ) .
\]

Edge ideals have been generalized to \emph{path ideals}; here we consider the equivalent formulation in terms of clutters.  For a directed graph $G$, let $P_k(G)$ denote the clutter whose faces are all $k$-vertex subsets of $V(G)$ which form a directed path in $G$.  Equivalently, a $k$-subset $A = \{v_1, v_2, \ldots, v_k\}$ is an edge of $P_k(G)$ if and only if there exists a permutation $\sigma$ on $k$ letters such that $v_{\sigma(i)} \rightarrow  v_{\sigma(i+1)}$ is a directed edge of $G$ for any $i$ with $1\leq i \leq k-1$.  Path ideals are then the ideals corresponding to the clutters $P_k(G)$.  

\begin{definition}
In \cite{russ}, Woodroofe defines a vertex $v$ of a clutter $\C$ to be \emph{simplicial} if, for any two distinct edges $e, f \in E(\C)$ with $v \in e \cap f$, there exists an edge $g \in E(\C)$ with $g \subseteq (e \cup f) - v$.  
\end{definition}

If $\C$ is the edges of a graph $G$, then a vertex $v$ is simplicial if and only if $(x,y)$ is an edge of $G$ whenever $x$ and $y$ are neighbors of $v$.  Unfortunately, simplicial vertices in graphs are not necessarily simplicial in the associated path ideals:  if $G$ is the graph shown in Figure \ref{path}, then every vertex of the graph save the central one is simplicial, but $P_3(G)$ has no simplicial vertex. 

\begin{figure}[htp]
\centering
\includegraphics[height = .8in]{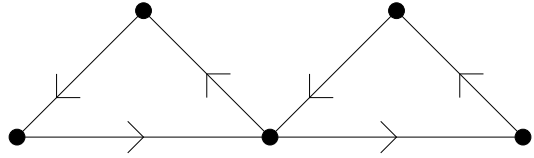}
\caption{A graph with simplicial vertices for which $\C_3(G)$ has no simplicial vertices.}\label{path}
\end{figure}

Here, we offer some evidence in favor of another generalization of edge ideals.  These clutters are the minimal non-faces of simplicial complexes studied by Szab\'o and Tardos in \cite{st} and studied further in \cite{de}.  If $X\subseteq V(G)$ we write $G[X]$ to denote the \emph{induced subgraph} on $X$, the graph with vertex set $X$ whose edges consist of all edges of $G$ whose endpoints lie in $X$.

\begin{definition}
Let $G$ be a graph, and choose $k$ with $2 \leq k \leq |V(G)|$.  The \emph{$k$-connected graph clutter} of $G$, for which we write $\C_k(G)$, is defined to be the clutter with the following edge set:
\[
E(\C_k(G)) = \{A\subseteq V(G) : G[A] \text{ is connected and } |A| = k\}.
\] 
\end{definition}

Note that the clutter $\C_k(G)$ is \emph{$k$-uniform} (meaning all its edges are of cardinality $k$) and $I(\C_2(G)) = I(G)$, as the only induced connected subgraphs of size $2$ are edges.  We believe that the ideals $I(\C_n(G))$ provide a more natural generalization of edge ideals.  One piece of evidence for this belief is the following theorem.

\begin{theorem}\label{simplicial}
A vertex $v$ of a graph $G$ is simplicial (in the graph sense) if and only if it is simplicial in $\C_k(G)$ for any $k$. 
\end{theorem}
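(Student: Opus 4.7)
The plan is to treat the two directions separately, with the backward direction following from just the $k = 2$ case and the forward direction requiring a short connectivity argument. For the backward direction, I would apply Woodroofe's definition to $\C_2(G)$, which is the edge clutter of $G$ itself. Given any two distinct neighbors $x, y$ of $v$ in $G$, the sets $\{v,x\}$ and $\{v,y\}$ are distinct edges of $\C_2(G)$ sharing $v$, so simpliciality of $v$ in $\C_2(G)$ gives an edge $g \subseteq \{x,y\}$ of $G$; since $|g|=2$, necessarily $g = \{x,y\}$, so $x$ and $y$ are adjacent. Hence $N_G(v)$ is a clique, which is precisely graph simpliciality.

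For the forward direction, fix $k$ and take two distinct edges $e, f \in E(\C_k(G))$ with $v \in e \cap f$. My goal is to exhibit an edge $g \in E(\C_k(G))$ contained in $(e \cup f)\setminus\{v\}$. The strategy is first to prove $G[(e \cup f)\setminus\{v\}]$ is a connected graph on at least $k$ vertices, and then to extract a connected $k$-vertex induced subgraph $g$ by iteratively removing leaves of a spanning tree down to $k$ vertices.

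The main step, and the one I expect to be the only real obstacle, is showing that $G[e\setminus\{v\}]$ is connected. Since $G[e]$ is connected and contains $v$, every component of $G[e\setminus\{v\}]$ must contain at least one vertex of $N_G(v) \cap e$; but $N_G(v)$ is a clique by hypothesis, so any two vertices of $N_G(v)\cap e$ are adjacent and therefore share a component. Thus $G[e\setminus\{v\}]$, and symmetrically $G[f\setminus\{v\}]$, is connected. To glue these into a connected graph on $(e\cup f)\setminus\{v\}$, I would pick any $w_e \in N_G(v)\cap e$ and $w_f \in N_G(v)\cap f$; these two vertices are either equal or adjacent, again because $N_G(v)$ is a clique, and that single shared vertex or edge links the two connected pieces. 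Finally, since $e \neq f$ we have $|e\cup f|\geq k+1$, so $(e\cup f)\setminus\{v\}$ has at least $k$ vertices, and the leaf-pruning step produces the required $g$.
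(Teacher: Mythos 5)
Your proof is correct and follows essentially the same route as the paper: the backward direction via the $k=2$ case, and the forward direction by showing $G[(e\cup f)\setminus\{v\}]$ is connected using that $N_G(v)$ is a clique, then extracting a connected $k$-subset. You merely spell out two steps the paper leaves implicit (the component argument for connectivity of $G[e\setminus\{v\}]$ and the leaf-pruning extraction), which is fine.
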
 

\begin{proof}
Note that the ``only if'' direction is immediate: If $v$ is not simplicial in $G$, then it is not simplicial in $\C_2(G)$, by definition.  

For the ``if'' direction, suppose $v$ is simplicial in $G$, and let $k \geq 2$.  If the connected component of $G$ which contains $v$ has fewer than $k$ vertices, then $v$ is simplicial, as it is not contained in any edge (and so satisfies the condition vacuously).  Otherwise, let $A$ and $B$ be two subsets of $V(G)$ of size $k$ so that $G[A]$ and $G[B]$ are connected, and $v \in A \cap B$.  Then both $A$ and $B$ must intersect the neighbors of $v$.  Let $A_v\subseteq A$ be the vertices in $A$ that are adjacent to $v$, and define $B_v \subseteq B$ similarly.  Since $v$ is simplicial in $G$, $G[A_v \cup B_v]$ is connected (in fact, complete), meaning each of $G[A -v]$ and $G[B - v]$ is connected.  Thus $G[(A \cup B ) - v]$ is connected.  As $A \neq B$ and $|A| = |B| = k$, $|(A \cup B)  - v| \geq k + 1$.  Thus there is a subset $C \subseteq (A \cup B ) - v$ such that $|C| = k$ and $G[C]$ is connected, which shows $v$ is simplicial in $\C_k(G)$. \end{proof}

By definition, no clutter that is not uniform can be a connected graph clutter.  The following shows that not all uniform clutters are connected graph clutters.  

\begin{observation}
Let $\C$ be the clutter with edges $\{a, b, c\}$ and $\{c, e, d\}$.  Then $\C$ is not a connected graph clutter.
\end{observation}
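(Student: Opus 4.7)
The plan is to argue by contradiction. Suppose there exist a graph $G$ and an integer $k \geq 2$ with $\C = \C_k(G)$. Since $\C_k(G)$ is $k$-uniform by construction and both edges of $\C$ have cardinality $3$, we must have $k = 3$. Also, $V(G) = V(\C_3(G)) = V(\C) = \{a,b,c,d,e\}$, so we are looking at an honest graph on exactly five vertices whose connected $3$-vertex induced subgraphs are precisely $\{a,b,c\}$ and $\{c,d,e\}$.

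The main step is to exploit the fact that $c$ appears in both edges. Since $G[\{a,b,c\}]$ is connected on three vertices, $c$ cannot be isolated in this induced subgraph, so $c$ has at least one neighbor in $\{a,b\}$; call it $x$. Symmetrically, since $G[\{c,d,e\}]$ is connected, $c$ has at least one neighbor $y \in \{d,e\}$. Then $G[\{x,c,y\}]$ contains the two edges $cx$ and $cy$, so it is connected (a path through $c$). Hence $\{x,c,y\}$ is an edge of $\C_3(G) = \C$.

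This is a contradiction, because $\{x,c,y\}$ contains exactly one vertex from $\{a,b\}$ and exactly one from $\{d,e\}$, so it is neither $\{a,b,c\}$ nor $\{c,d,e\}$. I do not anticipate a real obstacle here; the only thing worth being careful about is the underlying vertex set convention (that $V(\C) = \{a,b,c,d,e\}$ with no extra isolated vertices to worry about), which I would briefly remark on before the argument.
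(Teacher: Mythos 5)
Your proof is correct and follows essentially the same argument as the paper: $c$ must have a neighbor in $\{a,b\}$ and a neighbor in $\{d,e\}$, and the resulting connected triple through $c$ would be a third edge of $\C_3(G)$, a contradiction. Your additional remarks pinning down $k=3$ via uniformity and the vertex set are fine but not a substantive departure.
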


\begin{proof}
Suppose there were a graph $G$ with $\C = \C_3(G)$.  Since $\{a, b, c\} \in E(\C)$, it must be the case that at least one of $a$ or $b$ is a neighbor of $c$ (without loss, assume it is $b$).   Similarly, we can assume (again, without loss), that $d$ is a neighbor of $c$.  But then we would have $\{b, c, d\} \in E(\C)$, which is a contradiction.
\end{proof}

Next we compute domination parameters for $\C_k(G)$ when $\C$ is a path or cycle. Our results are summarized in the following theorem.\\

\begin{theorem}\label{pathcycle}
Let $P_n$ $(\Gamma_n)$ denote the path (cycle) with $n$ vertices. The following table gives the domination  numbers for the $k$-connected clutter of $P_n$ $(\Gamma_n)$:\\
\begin{center}
\begin{tabular}{|c|c|c|c|}
\hline
\text{Clutter} $\C$  & $i(\C)$ & $\epsilon(\C)$  & $\pd(\C)$  \\

\hline
$\C_k(\Gamma_n)$ & $\lceil \frac{(k-1)n}{k+1} \rceil$  & $\lceil \frac{n}{3k-2} \rceil$ & $\lfloor \frac{n}{k+1} \rfloor+ \lceil \frac{n}{k+1} \rceil$\\
\hline
$\C_k(P_n)$ & $\lceil \frac{kn}{k+1} \rceil- \lfloor \frac{n+1}{k+1} \rfloor$ &  $\lceil \frac{n}{3k-2} \rceil$ &         $\lfloor \frac{n}{k+1} \rfloor + \lfloor \frac{n+1}{k+1} \rfloor$\\
\hline
\end{tabular}\\
\end{center}
(The projective dimensions are included for completeness as they are already known; see, for instance, \cite{faridi, adampathideal}).
\end{theorem}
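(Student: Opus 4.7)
The plan is to handle each of the six entries in the table separately. Since the projective dimension column is already established in the cited literature \cite{faridi, adampathideal}, I would just defer to those references and focus on the combinatorial computations of $i(\C)$ and $\epsilon(\C)$. Both rely on the same structural observation: in $\C_k(G)$ for $G = P_n$ or $\Gamma_n$, an edge is exactly a set of $k$ consecutive vertices (taken cyclically in the cycle case), and two vertices are neighbors in $\C_k(G)$ precisely when their $G$-distance is at most $k-1$.

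For $\epsilon(\C)$, my argument would be as follows. Each edge $E$ of $\C_k(G)$ dominates at most $3k-2$ vertices: the $k$ vertices of $E$ together with at most $k-1$ further vertices on each side. This immediately yields the lower bound $\epsilon(\C) \geq \lceil n/(3k-2)\rceil$ for both $P_n$ and $\Gamma_n$. For the matching upper bound I would exhibit an explicit edgewise dominant set by choosing edges whose domination windows (each of length at most $3k-2$) tile $V(G)$, with a small boundary adjustment for $P_n$ and at the wrap-around point for $\Gamma_n$.

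For $i(\C_k(P_n))$, I would parametrize a maximal independent set $A$ by its complement $B = [n]\setminus A = \{b_1 < \cdots < b_m\}$ and the gap sequence $c_i = b_i - b_{i-1}$ (with conventions $b_0 = 0$ and $b_{m+1} = n+1$), so $\sum c_i = n+1$. One then checks that $A$ is maximal independent in $\C_k(P_n)$ exactly when $c_i \leq k$ for all $i$ (no $k$ consecutive vertices lie in $A$) and $c_i + c_{i+1} \geq k+1$ for $1 \leq i \leq m$ (every $b_i$ is contained in some length-$k$ window whose other entries all lie in $A$). Maximizing $m$ subject to these constraints, by summing the pair inequalities $c_i + c_{i+1} \geq k+1$ in alternating fashion against $\sum c_i = n+1$, pins down $|A| = n - m$ and yields the stated formula. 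The cycle case $i(\C_k(\Gamma_n))$ is analogous, with the gap sequence summing to $n$ and the pair inequality taken cyclically modulo $m$; here pairing all $m$ inequalities and dividing gives $|A| \geq \lceil (k-1)n/(k+1)\rceil$ directly.

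The hardest part will be matching the floor/ceiling expressions in every residue class. Both extremal arguments require compatible lower and upper bounds for every residue of $n$ modulo $k+1$ (for $i$) or modulo $3k-2$ (for $\epsilon$), which forces a short case analysis where one or two boundary gaps (or one leftover window) are adjusted by one to absorb the remainder. Carrying this out carefully — particularly separating the path's two boundary behaviors from the uniform cyclic behavior of $\Gamma_n$ — is where the bulk of the bookkeeping lies.
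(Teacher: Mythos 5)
Your proposal is correct and follows essentially the same route as the paper: cite the literature for the $\pd$ column, bound $\epsilon$ below by noting each edge dominates at most $3k-2$ vertices and match it with an explicit tiling, and compute $i$ by encoding a minimum maximal independent set through the gap sequence of its complement subject to $t_i\le k$, $t_i+t_{i+1}\ge k+1$, with the odd/even (alternating) pairing of the inequalities for the path. The only difference is bookkeeping conventions (your path gaps sum to $n+1$ with symmetric constraints, the paper's to $n$ with an asymmetric end condition), and both you and the paper leave the extremal constructions as routine case checking.
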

\vspace{3mm}
\begin{proof}
We start with $\C = \C_k(\Gamma_n)$. Consider an independent dominating set $S$ of $\C$ and let $\{x_1,\cdots, x_l\} = V(\C) \backslash S$. Let $t_i = x_{i+1}-x_i$ for $1\leq i\leq l-1$ and $t_l = x_1+n-x_l$. The fact that $S$ is independent and dominating is equivalent to the sequence $\{t_i\}$ satisfying the following properties (set $t_{l+1}=t_0$):
  
\[
\begin{cases}
1\leq t_i\leq k,\\
t_i +t_{i+1} \geq k+1, & 1 \leq i\leq l\\
\sum_{i=1}^{l} t_i = n
\end{cases}
\]

Adding all the inequalities $t_i +t_{i+1} \geq k+1$ we get $l(k+1) \leq  2n$. It follows that $l\leq \flo{2n}{k+1}$.
To show that the equality can be obtained, let $2n = l(k+1) + r$ with $0\leq r\leq k$. Set $t_{2i+1}=a= \cei{k+1}{2}$,  $t_{2i}=\flo{k+1}{2}$.
Then the conditions $t_i+t_{i+1}\geq k+1$ are satisfied automatically. Showing that the $t_i$ can be modified to satisfy the above system of inequalities is equivalent to showing:
\[a\Bcei l2 + b\Bflo l2 \leq \frac{(k+1)l+r}{2} \leq kl \]

If $l=1$ we simply pick $t_1=n$, so assume $l\geq 2$.  We can also assume $k\geq 2$. The rightmost inequality is equivalent to $l+r \leq kl$ which is easily seen to be true when $k,l\geq 2$. The leftmost one is equivalent to (taking into account that $a+b=k+1$):
$$(k+1)\left(\Bcei l2 -\frac l2\right) \leq b\left(\Bcei l2 - \Bflo l2\right) + r $$
or 
$$(k+1-2b)\left(\Bcei l2 -\frac l2\right) \leq r$$ 
If $k+1$ or $l$ is even then the left hand side is $0$. If they are both odd, then $r$ must be odd, and the left hand side is $1/2$.  Thus, the maximal value for $l$ is $\flo {2n}{k+1}$ and $i(\C) = n- \flo {2n}{k+1} = \cei{(k-1)n}{k+1}$.

To compute $\epsilon(\C)$, first we note that since each edge in $\C$ can dominate at most $k + k-1+k-1 =3k-2$ vertices, we must have $\epsilon(\C)\geq \cei{n}{3k-2}$. It is easy to see that equality can be achieved.  

Let's now consider $\C = \C(P_n)$. As before, $n-i(\C)$ is the largest value of $l$ such that the following system have integer solutions: 
\[
\begin{cases}
1\leq t_1\leq k,\\
t_i +t_{i+1} \geq k+1, & 1 \leq i\leq l-1\\
t_l+t_{l+1} \geq k, \\
\sum_{i=1}^{l+1} t_i = n
\end{cases}
\]

Let $A$ and $B$ be the set of odd and even numbers between $1$ and $l$ respectively. If $l$ is odd, then we have:
\begin{align*}
 n+1  &= (t_1+t_2) + \cdots + (t_l+t_{l+1}+1) \\
         & \geq |A|(k+1)
\end{align*}
It follows that $|A|\leq \flo{n+1}{k+1}$. It is clear  that $|B|(k+1) \leq n$, so  $|B|\leq \flo{n}{k+1}$. Thus 
$$l = |A|+ |B| \leq  \flo{n}{k+1} + \flo{n+1}{k+1}$$
When $l$ is even, we have $|A|, |B| \leq \flo{n}{k+1}$, so the same inequality holds. That $l$ can achieve the upper bound  can be proved similarly to the case of $\C(\Gamma_n)$ and is left as an exercise for the reader. 

The computation of $\epsilon(\C(\Gamma_n))$ is identical to the case of paths. 
\end{proof}

\section{Faltings' Bound on Projective Dimensions}\label{secfalt}

Write $\cd(I)$ to mean the cohomological dimension of an ideal $I$.  As $\cd(I) = \pd(S/I)$ for any squarefree monomial ideal (see \cite[Corollary 4.2]{sw}), the following general bound of Faltings (\cite{faltings}) applies to bound $\pd(\C)$ for any clutter $\C$.  

\begin{theorem}[\cite{faltings}]\label{faltings}
For any ideal $I \subset S = \text{\emph{k}}[x_1, x_2, \ldots, x_n]$, 
\[
\cd(I) \leq n - \left\lfloor \frac{n-1}{\bh(I)}\right\rfloor.
\]
\end{theorem}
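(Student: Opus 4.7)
We prove Faltings' bound in the squarefree monomial case, as the paper announces. First, we invoke the Singh--Walther identity $\cd(I) = \pd(S/I)$ for any squarefree monomial $I \subseteq S$ (reference \cite{sw} cited just above the theorem), which reduces the task to bounding the projective dimension. Write the irredundant intersection $I = P_1 \cap \cdots \cap P_r$ into minimal primes; each $P_j = (x_i : i \in V_j)$ is a monomial prime with $|V_j| \leq h := \bh(I)$.

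We would compute $H^\bullet_I(S)$ via the Mayer--Vietoris spectral sequence
\[
E_1^{p,q} \;=\; \bigoplus_{\substack{J \subseteq [r]\\ |J| = p+1}} H^q_{P_J}(S) \;\Longrightarrow\; H^{p+q}_I(S),
\]
where $P_J := \sum_{j \in J} P_j = (x_i : i \in \bigcup_{j \in J} V_j)$. Since $P_J$ is variable-generated, the Koszul calculation collapses its local cohomology into a single degree: $H^q_{P_J}(S) = 0$ unless $q = n_J := |\bigcup_{j \in J} V_j|$. Hence the nonzero $E_1$-entries are confined to positions $(p, q)$ with $q = n_J \leq \min\bigl(n,\, (p+1)h\bigr)$ for some $J$ of size $p+1$.

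The remaining work is a combinatorial degree count along each antidiagonal $p+q = i$. The $d_1$-differentials arrange the surviving $E_1$-terms into a \v{C}ech-type complex on the nerve of the collection $\{V_j\}$, and the plan is to show that whenever $i > n - \lfloor(n-1)/h\rfloor$, the competing bounds $q \leq n$ and $q \leq (p+1)h$ force every nonzero $E_1$-contribution on that antidiagonal to be cancelled by $d_1$ or by a later differential, yielding $H^i_I(S) = 0$. The main obstacle will be extracting the precise floor $\lfloor(n-1)/h\rfloor$: we would split into the regimes $(p+1)h \leq n$ (where the binding constraint is $i \leq (p+1)(h+1) - 1$) and $(p+1)h > n$ (where $i \leq p + n$ binds), with the floor emerging at the transition $p+1 = \lceil n/h \rceil$. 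Cancellations in the second regime rely on contractibility of the relevant part of the nerve, which kills the contributions involving $H^n_{\mathfrak{m}}(S)$ at higher pages.
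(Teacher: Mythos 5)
Your route---decomposing $I$ into its minimal primes $P_1,\ldots,P_r$ and running a Mayer--Vietoris spectral sequence for local cohomology---is genuinely different from the paper's proof (which passes to the Alexander dual, bounds $\reg(I^\vee)$ via the Taylor resolution, and reduces to an edge-cover count), but as written it has a real gap, and the gap sits exactly where you defer the work. The spectral sequence is misstated: with $E_1^{p,q}=\bigoplus_{|J|=p+1}H^q_{P_J}(S)$ the convergence is to $H^{q-p}_I(S)$, not $H^{p+q}_I(S)$; already for two primes the Mayer--Vietoris sequence places $H^{q}_{P_1+P_2}$ next to $H^{q-1}_{P_1\cap P_2}$, i.e.\ larger index sets contribute with a \emph{downward} shift. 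This is not a cosmetic point. With the correct indexing the theorem is a pure degree count needing no cancellation: the only nonzero entries have $q=n_J$, hence total degree $n_J-|J|+1\leq \min(n,|J|h)-|J|+1$, and writing $c=\lceil n/h\rceil$ one checks this is at most $n-c+1=n-\lfloor (n-1)/h\rfloor$ in both regimes ($|J|\geq c$ gives $n-|J|+1\leq n-c+1$; $|J|\leq c-1$ gives $|J|(h-1)+1\leq (c-1)(h-1)+1\leq n-c+1$ since $n\geq (c-1)h+1$). With your indexing, by contrast, terms such as $H^n_{\mathfrak{m}}(S)$ for large $J$ sit in total degree near $n+r$, so the entire burden falls on the cancellation scheme you only announce (``the plan is to show\dots'', ``the main obstacle will be extracting the precise floor''), and that scheme is the unproven heart of the argument. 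Moreover its proposed mechanism is suspect: the $H^n_{\mathfrak{m}}(S)$-contributions are not generically killed by ``contractibility of the nerve''---they are precisely what produces nonzero local cohomology in the correct degree $q-p$ (e.g.\ $I=(x_1x_2)=(x_1)\cap(x_2)$ in two variables, where the $J=\{1,2\}$ term $H^2_{\mathfrak{m}}(S)$ survives to give $H^1_I(S)\neq 0$).

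If you repair the indexing, your argument closes and becomes an attractive alternative; in fact it is combinatorially the same count as the paper's, in dual language: the generators of $I^\vee$ are exactly the monomials $x^{V_j}$ attached to the minimal primes, subsets $J$ of primes correspond to subsets of Taylor-complex generators, and $n_J$ is the lcm degree, so your quantity $n_J-|J|+1$ is literally the $\deg(A)-|A|+1$ the paper maximizes; the final step $\lceil n/h\rceil-1=\lfloor (n-1)/h\rfloor$ with $h=\bh(I)$ is common to both. Two minor further points: having chosen to bound $\cd(I)$ directly through $H^\bullet_I(S)$, your opening appeal to $\cd(I)=\pd(S/I)$ does no work; and like the paper, you are proving only the squarefree monomial case of Faltings' general statement, which is the intended scope here but should be said explicitly.
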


The next proposition follows directly from the Taylor Resolution (see \cite{adamandtai}).  

\begin{proposition}\label{taylor}
Let $I \subseteq S$ be a monomial ideal generated by monomials $m_1, m_2, \ldots, m_t$.  For any set $A \subseteq \{1, 2, \ldots, t\}$, we write $\deg(A)$ to denote the quantity $\deg\lcm\{ m_i : i \in A\}$.  Then 
\[
\reg(I) \leq \max\{ \deg(A) - |A| : A \subseteq \{1, 2, \ldots, t\} \} + 1
\]
\end{proposition}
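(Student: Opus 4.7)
The plan is to read the bound directly off of the Taylor resolution of $S/I$. Recall that this resolution has the form
\[
0 \to T_t \to T_{t-1} \to \cdots \to T_1 \to T_0 = S \to S/I \to 0,
\]
where $T_p = \bigoplus_{|A|=p} S(-\deg(A))$, with one basis element $e_A$ in multidegree $\lcm\{m_i : i\in A\}$ for each subset $A \subseteq \{1,2,\ldots,t\}$ of size $p$. Thus in homological degree $p$, every shift occurring is of the form $\deg(A)$ for some $A$ with $|A| = p$.

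From this (generally non-minimal) free resolution I would invoke the standard upper bound on regularity: if $F_\bullet \twoheadrightarrow M$ is any graded free resolution with $F_p = \bigoplus_j S(-j)^{b_{p,j}}$, then $\reg(M) \leq \max\{j - p : b_{p,j} \neq 0\}$. Applying this to the Taylor resolution of $S/I$ gives
\[
\reg(S/I) \;\leq\; \max\{\deg(A) - |A| : A \subseteq \{1,2,\ldots,t\}\}.
\]
Non-minimality of the Taylor resolution is harmless here, since passing to a minimal resolution only removes shifts, never adds them.

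Finally, I would convert between $\reg(S/I)$ and $\reg(I)$. Since $I$ is nonzero, a minimal free resolution of $I$ is obtained from a minimal free resolution of $S/I$ by deleting the copy of $S$ in homological degree $0$ and shifting: $\beta_{i,j}(I) = \beta_{i+1,j}(S/I)$. This gives $\reg(I) = \reg(S/I) + 1$, and combining with the previous inequality yields the stated bound.

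There is no real obstacle; the only subtlety is bookkeeping the $+1$ shift between $\reg(I)$ and $\reg(S/I)$, and noting that the non-minimality of Taylor only weakens, not strengthens, the resulting bound, so the inequality still holds as stated.
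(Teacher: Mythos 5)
Your proof is correct and is exactly the argument the paper has in mind: the paper gives no details, saying only that the proposition ``follows directly from the Taylor Resolution,'' and you have simply spelled out that standard deduction (read the shifts off the Taylor complex, use that any graded free resolution bounds regularity since $\mathrm{Tor}$ is a subquotient of the tensored complex, and account for the $+1$ shift between $\reg(S/I)$ and $\reg(I)$). No discrepancy.
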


Using Proposition \ref{taylor} and Alexander duality, we can easily recover Faltings' bound for the projective dimension of a squarefree monomial ideal.  

\begin{proof}[Proof of Theorem \ref{faltings}]
Let $I$ be a squarefree monomial ideal, and $I^\vee$ its Alexander dual.  Then $I^\vee$ is also squarefree, so we can write $I^\vee = I(\C)$ for some clutter $\C$ without isolated vertices.  Then $\pd(S/I) = \reg(I(\C))$, so we use Proposition \ref{taylor} to bound this quantity.  Let $E(\C) = \{e_1, e_2, \ldots, e_t\}$ be the edge set of $\C$.  We claim that $\deg(A) - |A|$ is maximized when $\deg(A) = n$.  Indeed, $\lcm\{e_i : i \in A\}$ is squarefree for any $A$.  If this lcm does not contain $x_i$ for some $i$, let $e_j$ be an edge of $\C$ containing $x_i$.  Then $\deg(A \cup \{j\}) \geq \deg(A) + 1$, so $\deg(A \cup \{j\}) - |A \cup \{j\}|\geq \deg(A) -|A|$, meaning we may consider $A \cup \{j\}$ rather than $A$ as a set maximizing $\deg(A) - |A|$.  

Thus, $\reg(I(\C)) \leq n - \alpha + 1$, where $\alpha$ is the smallest cardinality of a set $F\subseteq E(\C)$ such that every vertex of $\C$ is contained in some edge of $F$.  Now let $d$ be the maximal cardinality of an edge of $\C$.  Then, since each edge contains at most $d$ vertices, we have $d \alpha \geq n$, and so $\alpha \geq n/d$.  Thus, $\reg(I(\C)) \leq n - \alpha + 1 \leq n - n/d + 1$.  Since $\reg(I(\C))$ is an integer, we have $\reg(I(\C)) \leq n - \lceil n/d \rceil + 1$. 

Now note that $\lceil n/d\rceil -1 \geq \lfloor (n-1)/d \rfloor$.  Since $d$ is the maximal degree of a generator of $I(\C) = I^\vee$, we have $d = \bh(I)$.  Thus, 
\[
\pd(S/I) = \reg(I(\C)) \leq n - \lceil n/d \rceil + 1 \leq n - \left\lfloor  \frac{n-1}{d} \right\rfloor = n - \left\lfloor \frac{n-1}{\bh(I)}\right\rfloor.
\]
\end{proof}

\begin{proposition}
Let $\C$ be a clutter.  If $\epsilon(\C) \geq i(\C)/(|V(\C)| - i(\C))$, Theorem \ref{edgewise} improves upon (or recovers) the bound of Theorem \ref{faltings}.
\end{proposition}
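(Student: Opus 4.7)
The plan is to convert both bounds into the same parameters and then check a single algebraic inequality. Write $n = |V(\C)|$, $i = i(\C)$, $\epsilon = \epsilon(\C)$, and $d = \bh(I(\C))$. By the remark following Theorem \ref{edgewise}, $d = n - i$, so Faltings' bound reads
\[
\pd(\C) \;\leq\; n - \left\lfloor \frac{n-1}{n-i} \right\rfloor,
\]
while Theorem \ref{edgewise} gives $\pd(\C) \leq n - \epsilon$. Thus Theorem \ref{edgewise} improves on (or recovers) Theorem \ref{faltings} exactly when $\epsilon \geq \lfloor (n-1)/(n-i) \rfloor$, and this is the inequality I would establish.

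Assuming the hypothesis $\epsilon \geq i/(n-i)$, multiply through by $n-i > 0$ to obtain $\epsilon(n-i) \geq i$, and then add $n-i$ to both sides to get $(\epsilon+1)(n-i) \geq n$. This forces
\[
\frac{n-1}{n-i} \;<\; \epsilon + 1,
\]
and since $\epsilon$ is an integer, taking the floor on the left gives $\lfloor (n-1)/(n-i) \rfloor \leq \epsilon$, which is exactly what is needed.

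There is essentially no obstacle here: the proposition is a direct arithmetic comparison, and the only real content is identifying $\bh(I(\C))$ with $|V(\C)|-i(\C)$ so that both bounds live on the same side of an inequality. If one wishes to be fastidious, the degenerate case $i=n$ (where $\C$ has no edges, $I(\C)=0$, and $\bh$ is zero) should be set aside separately, but in that case every bound on $\pd(\C)$ is trivial.
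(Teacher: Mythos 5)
Your proof is correct and follows essentially the same route as the paper: identify $\bh(I(\C))$ with $|V(\C)|-i(\C)$, then show the hypothesis implies $\epsilon(\C) \geq \lfloor (n-1)/(n-i(\C)) \rfloor$ via the chain $(\epsilon+1)(n-i) \geq n > n-1$ and integrality of $\epsilon$. Your aside on the degenerate case $i = n$ is a harmless extra precaution the paper leaves implicit.
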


\begin{proof}
For readability, write $\epsilon, i,$ and $v$ in place of $\epsilon(\C), i(\C),$ and $|V(\C)|$, respectively.  If $\epsilon \geq i /(v - i)$, then 
\begin{align*}
\epsilon &\geq \frac{i}{v-i} \Rightarrow (v-i)\epsilon \geq i \Rightarrow v\epsilon - i \epsilon - i \geq 0 \Rightarrow  v\epsilon - i \epsilon - i + v \geq v \Rightarrow (\epsilon + 1)(v - i) \geq v \\
&\Rightarrow (\epsilon + 1)(v - i) > v-1 \Rightarrow \epsilon + 1 > \frac{v-1}{v-i} \Rightarrow \epsilon \geq \left\lfloor \frac{v-1}{v-i} \right\rfloor
\end{align*}
Setting $|V(\C)| = n$, the above translates to $\epsilon(\C) \geq \lfloor (n-1)/(n-i(\C)) \rfloor$.  Since $n - i(\C) = \bh(I(\C))$, the result follows.  
\end{proof}

\begin{remark}
If $\C$ is a clutter, write $\reg(\C)$ to denote the (Castelnuovo-Mumford) regularity of $I(\C)$.  As $\reg(\C) = \pd(\C^\vee)$, our bound from Theorem \ref{edgewise} gives us that 
\[
\reg(\C) \leq |V(\C)| - \epsilon(\C^\vee).
\]
In \cite{lm}, the authors use an interesting new technique to bound $\reg(\C)$.  If we let $\C$ be the edges of the pentagon then $\epsilon(\C^\vee) = 1$, and the method in \cite{lm} gives $\reg(\C)   \leq 4 = |V(\C)| - \epsilon(\C^\vee)$, meaning the two bounds are equal in that case.  However, the method also gives $\reg(\C^\vee) \leq 4$, whereas our method gives $\reg(\C^\vee) \leq |V(\C)| - \epsilon((\C ^\vee)^\vee) = |V(\C)| - \epsilon(\C) = 5 - 2  = 3$.  So far, we have been unable to find a clutter for which the bound given in \cite{lm} is stronger than the bound associated to Theorem \ref{edgewise}.  
\end{remark}

\section{Homological Consequences}\label{apps}

We begin this section by stating the well-known \emph{Hochster's Formula} in the language of clutters.  First, we need the following definition.  

\begin{definition}
Let $\C$ be a clutter without isolated vertices.  We write $\Delta_\C$ to denote the \emph{Stanley-Reisner Complex} of $\C$, the simplicial complex on $V(\C)$ whose faces are independent sets.  For $X\subseteq V(\C)$, we write $\Delta_\C[X]$ to denote the subcomplex of $\Delta_\C$ consisting of all faces whose vertices lie in the set $X$.  
\end{definition}

\begin{theorem}[Hochster's Formula]\label{hochster}
Let $\C$ be a clutter without isolated vertices.  Then the multigraded Betti numbers of $I(\C)$ are given by
\[
\beta_{i-1, x^A}(I(\C)) = \dim_{{\bf k}} (\tilde{H}_{|A| - i - 1}(\Delta_\C[A]), \text{k}).
\]
In particular, $\pd(I(\C))$ is the least integer $i$ such that 
\[
\tilde{H}_{|A| - i - j - 1}(\Delta_\C[A]) = 0
\]
for all $j > 0$ and $A \subseteq V(\C)$.  
\end{theorem}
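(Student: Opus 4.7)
The plan is to prove Hochster's formula by computing the multigraded Tor of $S/I(\C)$ against $\text{k}$ using the Koszul complex, and then identifying the resulting chain complex combinatorially with (a shift of) the reduced simplicial chain complex of $\Delta_\C[A]$. First I would reduce from $I(\C)$ to $S/I(\C)$: the short exact sequence $0 \to I(\C) \to S \to S/I(\C) \to 0$, together with the vanishing of $\mathrm{Tor}_i^S(S,\text{k})$ for $i>0$, yields a dimension shift $\beta_{i-1, x^A}(I(\C)) = \beta_{i, x^A}(S/I(\C)) = \dim_{\text{k}} \mathrm{Tor}^S_i(S/I(\C), \text{k})_{x^A}$, so it suffices to compute this last quantity.

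Next, I would compute the Tor using the Koszul complex $K_\bullet$ on the variables $x_1, \ldots, x_n$, which is a minimal $\mathbb{Z}^n$-graded free resolution of $\text{k}$ over $S$. The complex $S/I(\C) \otimes_S K_\bullet$, restricted to multidegree $x^A$, has a natural basis in homological degree $i$ indexed by subsets $C \subseteq A$ with $|C| = i$ and $x^{A \setminus C} \notin I(\C)$; equivalently, by subsets $F = A \setminus C$ that are faces of $\Delta_\C[A]$ of dimension $|A| - i - 1$.

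The crux of the argument is then to recognize the induced Koszul differential on this $A$-graded strand as the reduced simplicial boundary map of $\Delta_\C[A]$: removing a variable $x_v$ from the Koszul wedge indexed by $C$ and absorbing it into the polynomial factor indexed by $A \setminus C$ corresponds, up to the alternating Koszul sign, to deleting the vertex $v$ from the face $F$. Consequently the $x^A$-strand computes $\tilde{H}_{|A| - i - 1}(\Delta_\C[A];\text{k})$, which combined with the shift above gives the Betti number formula. The ``in particular'' statement follows because $\pd(I(\C))$ is by definition the largest $i$ with some $\beta_{i, x^A}(I(\C))$ nonzero; rewriting this via the just-established identification gives the displayed vanishing condition in the theorem.

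The main obstacle is the careful sign-tracking and index bookkeeping needed to match the Koszul differential on the $x^A$-strand with the reduced simplicial boundary operator on $\Delta_\C[A]$ under the shift $F \leftrightarrow A \setminus F$, of dimension $|A| - i - 1$. Once this identification is verified, everything else is a routine translation between the algebraic and topological sides.
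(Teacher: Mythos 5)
The paper itself does not prove this theorem; Hochster's formula is simply recalled as a well-known result stated in the language of clutters, so there is no internal proof for your attempt to be compared against. Your Koszul-complex strategy is one of the standard routes: the dimension shift $\beta_{i-1,x^A}(I(\C)) = \beta_{i,x^A}(S/I(\C))$ is correct, the identification of a multidegree-$x^A$ basis of $S/I(\C)\otimes_S K_\bullet$ in homological degree $i$ with subsets $C\subseteq A$, $|C|=i$, $x^{A\setminus C}\notin I(\C)$, i.e.\ with faces $F=A\setminus C$ of $\Delta_\C[A]$ of dimension $|A|-i-1$, is also correct.

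The gap is in the identification of the differential. With $F=A\setminus C$, removing $e_v$ from the Koszul wedge $e_C$ and multiplying $x_v$ into the polynomial factor sends $C$ to $C\setminus\{v\}$ and hence sends $F$ to $F\cup\{v\}$, not to $F\setminus\{v\}$: as the Koszul homological degree drops by one, the face size $|F|=|A|-i$ grows by one. So the induced map on the $x^A$-strand is the simplicial \emph{coboundary}, and the strand is (a shift of) the augmented cochain complex of $\Delta_\C[A]$, whose homology in degree $i$ is $\tilde H^{\,|A|-i-1}(\Delta_\C[A];\text{k})$ rather than $\tilde H_{|A|-i-1}(\Delta_\C[A];\text{k})$. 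This is not fatal --- over a field the reduced cohomology and homology in each degree have equal dimension (by universal coefficients, or because the Koszul complex is self-dual, so one may dualize the strand and match it directly with the reduced chain complex) --- but your write-up asserts the boundary-map identification where it actually fails, and the extra step must be supplied. Once that is repaired, the rest of your argument, including the translation to the projective-dimension vanishing statement, goes through.
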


We can bound the homology of the complex $\Delta_\C$ through a specialization of Theorem \ref{hochster}: Setting $A = V(\C)$ yields the following corollary.

\begin{corollary}\label{homvanish}
If $\C$ is a clutter, then $\tilde{H}_k(\Delta_\C) = 0$ for $k < |V(\C)| - \pd(\C) -1$.  
\end{corollary}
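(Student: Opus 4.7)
The plan is to derive the corollary by directly specializing the Betti-number identity in Hochster's Formula (Theorem~\ref{hochster}) to the subset $A = V(\C)$. Write $n = |V(\C)|$ and $p = \pd(\C) = \pd(S/I(\C))$, and recall the standard shift $\pd(I(\C)) = p - 1$. Substituting $A = V(\C)$ into the first display of Theorem~\ref{hochster} yields
\[
\beta_{i-1,\, x^{V(\C)}}(I(\C)) \;=\; \dim_{{\bf k}} \tilde{H}_{n-i-1}(\Delta_\C).
\]

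Next I would reparametrize in terms of the topological degree by setting $k = n - i - 1$, so that $i - 1 = n - k - 2$. This rewrites the identity as
\[
\beta_{n-k-2,\, x^{V(\C)}}(I(\C)) \;=\; \dim_{{\bf k}} \tilde{H}_k(\Delta_\C).
\]
Now suppose $k < n - p - 1$. Then $n - k - 2 > p - 1 = \pd(I(\C))$, so by the definition of projective dimension the Betti number on the left-hand side vanishes, forcing $\tilde{H}_k(\Delta_\C) = 0$, which is exactly the claim.

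I anticipate no substantive obstacle here, as the argument is essentially bookkeeping with indices. The only point that requires care is the degree shift between Betti numbers of $I(\C)$ and of $S/I(\C) = S/I(\C)$; once that shift is kept straight, the corollary is immediate from the $A = V(\C)$ specialization of Hochster's Formula. (The hypothesis in Theorem~\ref{hochster} that $\C$ has no isolated vertices is harmless: if $\C$ has any, passing to $\overline{\C}$ only adds cone points to $\Delta_\C$, rendering the reduced homology trivial and the bound vacuous.)
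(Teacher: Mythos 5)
Your argument is correct and is exactly the paper's approach: the paper obtains the corollary by specializing Hochster's Formula to $A = V(\C)$, and your proof simply carries out the index bookkeeping (the shift $\pd(I(\C)) = \pd(S/I(\C)) - 1$ and the substitution $k = |V(\C)| - i - 1$) explicitly, together with the harmless reduction for isolated vertices.
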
 

Using this corollary with Theorem \ref{edgewise} gives us the following.

\begin{corollary}
Let $\Sigma$ be a simplicial complex, and let $\C$ be the clutter of minimal non-faces of $\Sigma$.  Then 
\[
\tilde{H}_k(\Sigma) = 0
\]
whenever $k < \epsilon(\C) - 1$.  
\end{corollary}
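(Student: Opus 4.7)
The plan is to reduce this to a direct concatenation of Corollary \ref{homvanish} and Theorem \ref{edgewise}, after first checking that the Stanley-Reisner complex $\Delta_\C$ essentially is $\Sigma$.

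First, I would unwind definitions: an independent set of $\C$ is a subset of $V(\C)$ containing no minimal non-face of $\Sigma$, which is exactly the condition for being a face of $\Sigma$. So whenever $\C$ has no isolated vertices (so that $V(\C) = V(\Sigma)$ and the definition of $\Delta_\C$ applies), we have $\Delta_\C = \Sigma$ as simplicial complexes.

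Next I would handle the isolated-vertex case. If $v \in V(\Sigma)$ is isolated in $\C$, then $v$ lies in no minimal non-face, which means a set $A \ni v$ is a face of $\Sigma$ iff $A \setminus \{v\}$ is a face of $\Sigma$. Thus $v$ is a cone point of $\Sigma$, so $\Sigma$ is contractible and $\tilde{H}_k(\Sigma) = 0$ for all $k$, making the conclusion trivial. (If one prefers to avoid this case, $\Sigma$ can simply be replaced by $\Delta_{\overline{\C}}$ after deleting the cone points; the values of $\epsilon(\C)$ and the reduced homology of $\Sigma$ are unchanged.)

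Now, assuming $\C$ has no isolated vertices, the chain of inequalities is immediate. By Corollary \ref{homvanish},
\[
\tilde{H}_k(\Sigma) = \tilde{H}_k(\Delta_\C) = 0 \quad \text{whenever } k < |V(\C)| - \pd(\C) - 1.
\]
By Theorem \ref{edgewise}, $\pd(\C) \leq |V(\C)| - \epsilon(\C)$, so
\[
|V(\C)| - \pd(\C) - 1 \geq \epsilon(\C) - 1,
\]
and therefore $\tilde{H}_k(\Sigma) = 0$ for all $k < \epsilon(\C) - 1$.

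There is no real obstacle — the bulk of the intellectual content lives in Theorem \ref{edgewise} and in Hochster's formula (Theorem \ref{hochster}) — but the one step that could mislead a reader is the identification $\Delta_\C = \Sigma$, which does require the isolated-vertex caveat above. I would therefore spend most of the write-up making that identification precise and then simply cite the two previous results.
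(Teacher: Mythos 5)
Your proof is correct and follows the same route as the paper's: concatenate Theorem \ref{edgewise} with Corollary \ref{homvanish}. Your careful handling of the isolated-vertex (cone-point) case is a genuine improvement, since the paper's definition of $\Delta_\C$ and hence Corollary \ref{homvanish} technically presuppose no isolated vertices, a hypothesis the paper's own proof silently skips. One small slip in your parenthetical: deleting cone points does change the reduced homology of $\Sigma$ (from identically zero to possibly nonzero), so the claim that ``the reduced homology of $\Sigma$ is unchanged'' is not right; fortunately your primary argument (cone point $\Rightarrow$ $\Sigma$ contractible $\Rightarrow$ conclusion trivial) makes that alternative unnecessary.
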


\begin{proof}
Suppose $k < \epsilon(\C) - 1$.  By Theorem \ref{edgewise}, $\epsilon(\C) -1  \leq |V(\C)| - \pd(\C) - 1$, so the result follows from Corollary \ref{homvanish}. 
\end{proof}

\noindent \textbf{Acknowledgements}: The first author was partially supported by NSF grants DMS 0834050 and DMS 1104017.  We thank Sara Saeedi Madani for pointing out some errors in the first version of this paper.  We also thank Alexander Engstr\"om, Seyed Amin Seyed Fakhari, Tai Ha, Craig Huneke, Jason McCullough, Susan Morey, Adam Van Tuyl, and Russ Woodroofe for many helpful conversations and emails.

\end{document}